\def\NAT@def@citea{\def\@citea{\NAT@separator}}
\theoremstyle{plain}
\newtheorem{theorem}{Theorem}[section]
\newtheorem{lemma}[theorem]{Lemma}
\newtheorem{corollary}[theorem]{Corollary}
\newtheorem{proposition}[theorem]{Proposition}
\theoremstyle{definition}
\newtheorem{definition}[theorem]{Definition}
\newtheorem{example}[theorem]{Example}
\theoremstyle{remark}
\begin{document}

\articletype{}

\title{Upper semi-Weyl and upper semi-Browder spectra of unbounded upper triangular operator matrices}

\author{
\name{Wurichaihu Bai\textsuperscript{a,b}, Qingmei Bai\textsuperscript{c} and Alatancang Chen\textsuperscript{c$\ast$}\thanks{*Corresponding author. Email: alatanca@imu.edu.cn}}
\affil{\textsuperscript{a}School of Mathematical Sciences, Inner Mongolia University, Hohhot, China; \textsuperscript{b}School of Mathematical Sciences, Inner Mongolia Normal University ,Hohhot,  China;\textsuperscript{c}Hohhot Minzu College, Hohhot, China}
}

\maketitle

\begin{abstract}
In this paper, we study the unbounded upper
triangular operator matrix with diagonal domain. Some sufficient and necessary
conditions are given under which upper semi-Weyl spectrum (resp. upper semi-Browder
spectrum) of such operator matrix is equal to the union of the upper semi-Weyl spectra (resp. the upper
semi-Browder spectra) of its diagonal entries. As an application, the corresponding spectral properties of Hamiltonian operator matrix are obtained.\end{abstract}

\small\bf MR(2010) Subject Classification\ \ {\rm 47A53, 47A11}
\begin{keywords}
unbounded upper triangular operator matrix; upper semi-Weyl spectrum; upper semi-Browder spectrum; Hamiltonian operator matrix
\end{keywords}

\section{Introduction}

 Let $\mathcal{K},\mathcal{H}$ be the infinite dimensional separable Hilbert spaces and $\mathcal{C}(\mathcal{H},\mathcal{K})
 (\mathcal{C}^+(\mathcal{H},\mathcal{K}))$ be the set of all closed(closable) linear operators from $\mathcal{H}$ into $\mathcal{K}$. We also write $\mathcal{C}(\mathcal{H},\mathcal{H})(\mathcal{C}^+(\mathcal{H},\mathcal{H}))$ as $\mathcal{C}(\mathcal{H})(\mathcal{C}^+(\mathcal{H}))$. Let $G$ a linear subspace in $\mathcal{H}$. Then $\overline{G}$ and $G^\bot$ denote the closure and the orthogonal complement of $G$, respectively. For a (linear) operator
$T$ between Hilbert spaces, we use $\mathcal{D}(T), \mathcal{R}(T)$ and $\mathcal{N}(T)$ to denote the domain, the range
and the kernel of $T$, and write $\alpha(T)$ and $\beta(T)$ for the dimensions of the kernel $\mathcal{N}(T)$ and
the quotient space $\mathcal{H}/\mathcal{R}(T)$, respectively. According to [1], an operator $T\in \mathcal{C}(\mathcal{H},\mathcal{K})$ with dense domain is Fredholm, which can be defined as follows. An operator $T\in \mathcal{C}(\mathcal{H},\mathcal{K})$ with dense domain is said to be upper semi-Fredholm (resp., lower semi-Fredholm)
if $\alpha(T)<\infty$(resp.,$\beta(T)<\infty$) and $\mathcal{R}(T)$ is closed. If both $\alpha(T)$ and $\beta(T)$ are finite, then $T$ is called Fredholm operator. We call that $T$ is upper semi-Weyl (resp., lower semi-Weyl) if it is upper semi-Fredholm (resp., lower semi-Fredholm) with the index
$ind(T ) = \alpha(T)-\beta(T)\leq 0$ (resp., $ind(T)\geq 0$) and $T$ is upper semi-Browder (resp., lower semi--Browder) if it is upper semi-Fredholm (resp., lower semi-Fredholm) of finite ascent $asc(T)$(resp.,finite
descent $dsc(T)$), where
\begin{center}
$asc(T ) = inf\{n\mid \mathcal{N}(T^n)= \mathcal{N}(T^{n+1})\}$, \\ $ dsc(T ) = inf\{n\mid \mathcal{R}(T^n)= \mathcal{R}(T^{n+1})\}$with $inf\emptyset= \infty$.
\end{center}
We call that $T$ is Weyl if it is Fredholm with
$ind(T ) = 0$. Then, the upper semi-Weyl spectrum, upper semi-Browder spectrum of $T$ are, respectively,
defined by
\begin{center}
$\sigma_{SF_{+}^{-}}(T)=\{\lambda\in\mathbb{C}:T-\lambda$ is not upper semi-Weyl\},
\end{center}
\begin{center}
$\sigma_{lb}(T)=\{\lambda\in\mathbb{C}:T-\lambda$ is not upper semi-Browder\}.
\end{center}

Block operator matrices play a major role in coupled systems of partial differential
equations, and their spectral properties are of concerned interest.
Especially, the study of upper triangular operator matrices and related subjects is one of
the hottest areas in operator theory. Recently, a number of mathematicians have
studied $2\times2$ bounded upper triangular operator matrices (see, e.g. [2-4]). In [5-9], the
authors, making use of the single-valued extension property, estimated the defect sets
($\sigma_*(A) \cup \sigma_*(D))\backslash\sigma_*(T)$ and obtained some sufficient conditions for

~~~~~~~~~~~~~~~~~~~~~~~~~~~~~~~~~~~~~~$\sigma_*(T) =\sigma_*(A) \cup \sigma_*(D), $~~~~~~~~~~~~~~~~~~~~~~~~~~~~~~~~~~~(1.1)
where
\begin{center}
$T=\left[
      \begin{array}{cc}
        A & B \\
      0 & D \\
      \end{array}
    \right]$
\end{center}
is a bounded operator matrix acting on Banach space and $\sigma_* \in
\{\sigma_e, \sigma_w, \sigma_b\}$. In [10], the
authors extend these results to unbounded case . The main aim of this paper is to get sufficient
and necessary conditions for (1.1) of an unbounded operator with $\sigma_* \in
\{\sigma_{SF_{+}^{-}}, \sigma_{lb}\}$. One of the significant differences between
unbounded and bounded operator matrices arises in their domains. In general, one could not
get certain spectral properties of unbounded operator matrix $T$ using the factorization
\begin{center}
$T=\left[
      \begin{array}{cc}
        I & 0 \\
      0 & D \\
      \end{array}
    \right]\left[
      \begin{array}{cc}
        I & B \\
      0 & I \\
      \end{array}
    \right]\left[
      \begin{array}{cc}
        A & 0 \\
      0 & I \\
      \end{array}
    \right]$
\end{center}
where
\begin{center}
$T=\left[
      \begin{array}{cc}
        A & B \\
      0 & D \\
      \end{array}
    \right]: \mathcal{D}(A)\oplus \mathcal{D}(D)\subset \mathcal{H} \oplus \mathcal{H} \longrightarrow \mathcal{H}\oplus \mathcal{H}$
\end{center}
is a closed operator matrix.

Applying different method -- space decomposition technique -- we present some sufficient
and necessary conditions for (1.1) in this paper. More precisely, the defect sets
($\sigma_*(A) \cup \sigma_*(D))\backslash\sigma_*(T)$ with $\sigma_* \in
\{\sigma_{SF_{+}^{-}}, \sigma_{lb}\}$ are actually described, and, in addition,
these results are applied to a Hamiltonian operator matrix.

\begin {definition} (see [11]) A closed (linear) operator
\begin{center}
$H=\left[
      \begin{array}{cc}
        A & B \\
      C & -A^* \\
      \end{array}
    \right]: \mathcal{D}(H)\subset  \mathcal{H} \oplus \mathcal{H} \longrightarrow \mathcal{H}\oplus \mathcal{H}$
\end{center}
with dense domain
\begin{center}
$\mathcal{D}(H) = \mathcal{D}(A)\cap \mathcal{D}(C)\oplus \mathcal{D}(B)\cap \mathcal{D}(A^*)$,
\end{center}
is called a Hamiltonian operator matrix, if $A$ is a closed operator with dense domain and
$B,C$ are self-adjoint operators.
\end {definition}
\begin {definition} (see [12]) Let $T$ and $B$ be linear operators from $\mathcal{H}$ to $\mathcal{K}$. We say that $B$ is $T$-compact if

(1) $\mathcal{D}(T)\subset\mathcal{D}(B)$ and

(2) $B$ is compact on $\mathcal{H}_T$,\\
where $\mathcal{H}_T$ denotes $\mathcal{D}(T)$ endowed with the graph norm i.e. $\parallel x\parallel_T=\parallel x\parallel+\parallel Tx\parallel$, for $x\in\mathcal{D}(T)$.
\end {definition}
\section{Some properties of upper triangular operator matrix}
\begin{lemma} (see [12])
 Suppose that $T\in \mathcal{C}(\mathcal{H})$ is a Fredholm operator and $B$ is
$T$-compact. Then

(1) $T + B$ is a Fredholm operator,

(2) $ind(T + B) = ind(T )$.
\end{lemma}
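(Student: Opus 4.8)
The idea is to transfer the statement to the classical theory of bounded Fredholm operators by passing to the graph-norm space. Since $T$ is closed, $\mathcal{H}_T$ (that is, $\mathcal{D}(T)$ equipped with the graph norm $\|x\|_T=\|x\|+\|Tx\|$) is a Banach space, and $T$, regarded as a map $\widehat{T}\colon\mathcal{H}_T\to\mathcal{H}$ with $\widehat{T}x=Tx$, is bounded. I would observe that $\widehat{T}$ has the same kernel, the same range, and hence the same cokernel $\mathcal{H}/\mathcal{R}(T)$ as $T$; since $\mathcal{R}(T)$ is closed in $\mathcal{H}$, it follows that $\widehat{T}$ is a bounded Fredholm operator between Banach spaces with $\alpha(\widehat{T})=\alpha(T)$, $\beta(\widehat{T})=\beta(T)$, and therefore $ind(\widehat{T})=ind(T)$.

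Next I would unpack the hypothesis that $B$ is $T$-compact. From $\mathcal{D}(T)\subset\mathcal{D}(B)$ we get $\mathcal{D}(T+B)=\mathcal{D}(T)$, which is dense, so $T+B$ is densely defined and $\widehat{T+B}\colon\mathcal{H}_T\to\mathcal{H}$ is well defined; by hypothesis $\widehat{B}\colon\mathcal{H}_T\to\mathcal{H}$ is compact. I would then record the standard fact that a $T$-compact operator has $T$-bound $0$: if not, a normalization together with a weak-compactness argument in the Hilbert space $\mathcal{H}_T$ produces a bounded sequence $(x_n)\subset\mathcal{H}_T$ with $x_n\to0$ in $\mathcal{H}$ but $\|Bx_n\|=1$, contradicting compactness of $\widehat{B}$ (a compact operator sends a sequence converging weakly in $\mathcal{H}_T$ to one converging strongly in $\mathcal{H}$, while $x_n$ converges weakly to $0$ in $\mathcal{H}_T$). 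Hence $T+B$ is closed, by the standard fact that a closed operator perturbed by a relatively bounded operator of relative bound less than one stays closed; combined with the density of its domain this gives $T+B\in\mathcal{C}(\mathcal{H})$.

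Finally, $\widehat{T+B}=\widehat{T}+\widehat{B}$ is a compact perturbation of the bounded Fredholm operator $\widehat{T}$, so by the classical stability theorem for bounded Fredholm operators it is again Fredholm with $ind(\widehat{T+B})=ind(\widehat{T})$. Translating back through the identifications of the first paragraph: $\alpha(T+B)=\alpha(\widehat{T+B})<\infty$, $\mathcal{R}(T+B)=\mathcal{R}(\widehat{T+B})$ is closed, and $\beta(T+B)=\dim\mathcal{H}/\mathcal{R}(\widehat{T+B})<\infty$; thus $T+B$ is Fredholm and $ind(T+B)=ind(\widehat{T+B})=ind(\widehat{T})=ind(T)$. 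The main obstacle here is not a single computation but the bookkeeping in the first and last steps — making sure the Fredholm data of the unbounded closed operators $T$ and $T+B$ coincide exactly with those of their bounded ``graph'' versions $\widehat{T}$ and $\widehat{T+B}$, and that $T$-compactness of $B$ is literally compactness of $\widehat{B}$; once this dictionary is in place, the statement is exactly the bounded-operator theorem. The only genuinely technical point along the way is the verification that $T+B$ remains closed, which I would handle via the relative-bound-zero property of $T$-compact perturbations noted above.
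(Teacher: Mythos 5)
Your argument is correct, but note that the paper does not actually prove this lemma: it is quoted verbatim from reference [12] (Gohberg--Goldberg--Kaashoek) as a known fact, so there is no in-paper proof to compare against. What you supply is the standard proof, and the reduction you use --- replacing $T$ and $T+B$ by the bounded operators $\widehat{T},\widehat{T+B}\colon\mathcal{H}_T\to\mathcal{H}$, observing that kernels and ranges are unchanged as sets (so closedness of the range and the numbers $\alpha$, $\beta$ transfer verbatim), and noting that $T$-compactness of $B$ is by definition compactness of $\widehat{B}$ on $\mathcal{H}_T$ --- is exactly the dictionary one needs to invoke Atkinson's compact-perturbation theorem for bounded Fredholm operators between Banach spaces. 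The one step that deserves the care you give it is the closedness of $T+B$ (needed so that ``$T+B$ is Fredholm'' even makes sense in the closed-operator framework of the paper): your appeal to the fact that a $T$-compact perturbation has $T$-bound $0$ is valid here precisely because $\mathcal{H}_T$ is (equivalent to) a Hilbert space, hence reflexive, so the bounded normalized sequence has a weakly convergent subsequence and compactness of $\widehat{B}$ yields the contradiction; in a general non-reflexive Banach space this implication can fail. With that caveat acknowledged, your proof is complete and is essentially the proof one finds in [12].
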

\begin {lemma}
Let \begin{center}
$T =\left[
      \begin{array}{cc}
        A & B \\
      0 & D \\
      \end{array}
    \right]: \mathcal{D}(A)\oplus \mathcal{D}(D)\subset \mathcal{H} \oplus \mathcal{K} \longrightarrow \mathcal{H} \oplus \mathcal{K}$
\end{center}
be closed operator matrix such that $A\in \mathcal{C}(\mathcal{H}), D\in \mathcal{C}(\mathcal{K})$ with dense domains and let $B\in \mathcal{C}^+_{D}(\mathcal{K},\mathcal{H})=\{B\in \mathcal{C}^+(\mathcal{K},\mathcal{H}):\mathcal{D}(D)\subset\mathcal{D}(B), D\in C(K)\}$, then there exists some $B\in \mathcal{C}^+_{D}(\mathcal{K},\mathcal{H})$ such that $T$ is upper semi-Weyl operator if and only if $A$ is upper semi-Fredholm operator and
\begin{center}
$\begin {cases}
\alpha(D)<\infty~$and$~\alpha(A)+\alpha(D)\leq \beta(A)+\beta(D) \\
~or~\beta(A)=\alpha(D)=\infty &\mbox{if $\mathcal{R}(D)$ is closed}\\
   \beta(A)=\infty &\mbox{if $\mathcal{R}(D)$ is not closed}
\end{cases}$
\end{center}
\end {lemma}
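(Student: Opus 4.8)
The plan is to convert the statement into bookkeeping with the invariants $\alpha(A),\beta(A),\alpha(D),\beta(D)$ together with one auxiliary subspace, proving necessity by an estimate on $T$ and sufficiency by exhibiting $B$ explicitly. First I would record the identities valid for \emph{every} admissible $B$. Put $N_1=\{y\in\mathcal{N}(D):By\in\mathcal{R}(A)\}$; note $N_1\subseteq\mathcal{N}(D)\subseteq\mathcal{D}(D)\subseteq\mathcal{D}(B)$. The map $(x,y)\mapsto y$ gives an exact sequence $0\to\mathcal{N}(A)\to\mathcal{N}(T)\to N_1\to 0$, so $\alpha(T)=\alpha(A)+\dim N_1$; dually $\mathcal{R}(T)\cap(\mathcal{H}\oplus\{0\})=(\mathcal{R}(A)+B\mathcal{N}(D))\oplus\{0\}$, and projecting onto the second coordinate gives $\beta(T)=\dim\bigl(\mathcal{H}/(\mathcal{R}(A)+B\mathcal{N}(D))\bigr)+\beta(D)$. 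Finally $y+N_1\mapsto By+\mathcal{R}(A)$ embeds $\mathcal{N}(D)/N_1$ into $\mathcal{H}/\mathcal{R}(A)$, so $\dim(\mathcal{N}(D)/N_1)\le\beta(A)$; combined with the two identities this yields, when $\beta(A)<\infty$, the index relation $\alpha(T)-\beta(T)=\alpha(A)+\alpha(D)-\beta(A)-\beta(D)$ (with the usual conventions at $+\infty$).

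For \emph{necessity}, suppose $B\in\mathcal{C}^+_D(\mathcal{K},\mathcal{H})$ makes $T$ upper semi-Weyl. Then $\alpha(A)\le\alpha(T)<\infty$. The one genuinely analytic point is that $\mathcal{R}(A)$ is closed: since $\mathcal{R}(T)$ is closed, the reduced minimum modulus satisfies $\gamma(T)>0$, that is, $\|Tz\|\ge\gamma(T)\,\mathrm{dist}(z,\mathcal{N}(T))$ for $z\in\mathcal{D}(T)$; taking $z=(x,0)$ and using $\mathrm{dist}((x,0),\mathcal{N}(T))\ge\mathrm{dist}(x,V)$ with $V:=P_1\mathcal{N}(T)$ gives $\|Ax\|\ge\gamma(T)\,\mathrm{dist}(x,V)$ on $\mathcal{D}(A)$. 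As $\dim V\le\alpha(T)<\infty$, the decomposition $\mathcal{D}(A)=V\oplus(V^\perp\cap\mathcal{D}(A))$ shows $A$ is bounded below on $V^\perp\cap\mathcal{D}(A)$, hence has closed range there, so $\mathcal{R}(A)=A(V)+A(V^\perp\cap\mathcal{D}(A))$ is closed and $A$ is upper semi-Fredholm. The side conditions follow from the bookkeeping: if $\alpha(D)=\infty$ then $\dim N_1\le\alpha(T)<\infty$ forces $\beta(A)=\infty$ through the embedding above (the branch $\beta(A)=\alpha(D)=\infty$); if $\alpha(D)<\infty$ then either $\beta(A)=\infty$, whence $\alpha(A)+\alpha(D)\le\beta(A)+\beta(D)$ trivially, or $\beta(A)<\infty$ and the index relation together with $\alpha(T)\le\beta(T)$ gives $\alpha(A)+\alpha(D)\le\beta(A)+\beta(D)$. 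It remains to exclude $\mathcal{R}(D)$ not closed with $\beta(A)<\infty$: then $\alpha(D)<\infty$ by the previous sentence, $R:=\mathcal{R}(A)+B\mathcal{N}(D)$ is closed of finite codimension, and the quotient operator $S=(QB,D):\mathcal{D}(D)\to(\mathcal{H}/R)\oplus\mathcal{K}$ (with $Q:\mathcal{H}\to\mathcal{H}/R$) is closed, has closed range (the image of $\mathcal{R}(T)$, which contains $R\oplus\{0\}$, under the quotient map), and $\alpha(S)=\alpha(D)<\infty$, so $S$ is upper semi-Fredholm; since $(QB,0)$ is $D$-compact (finite rank and $D$-bounded), the upper semi-Fredholm form of Lemma 2.1 applied to $S-(QB,0)=(0,D)$ shows $\mathcal{R}(D)$ is closed, a contradiction.

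For \emph{sufficiency}, assume $A$ is upper semi-Fredholm and the listed conditions hold. If $\alpha(D)<\infty$ and $\alpha(A)+\alpha(D)\le\beta(A)+\beta(D)$, take $B=0$: then $T=\mathrm{diag}(A,D)$, so $\mathcal{R}(T)=\mathcal{R}(A)\oplus\mathcal{R}(D)$ is closed, $\alpha(T)=\alpha(A)+\alpha(D)<\infty$ and $\beta(T)=\beta(A)+\beta(D)\ge\alpha(T)$, so $T$ is upper semi-Weyl. In the remaining cases $\beta(A)=\infty$, hence $\mathcal{R}(A)^\perp$ is an infinite-dimensional separable Hilbert space (using that $\mathcal{R}(A)$ is closed). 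If $\mathcal{R}(D)$ is not closed, let $B$ be a unitary of $\mathcal{K}$ onto $\mathcal{R}(A)^\perp$, regarded as a bounded operator $\mathcal{K}\to\mathcal{H}$ and hence lying in $\mathcal{C}^+_D(\mathcal{K},\mathcal{H})$: then $\mathcal{R}(A)\perp\mathcal{R}(B)$ gives $\mathcal{N}(T)=\mathcal{N}(A)\oplus\{0\}$, a direct check using that $D$ is closed gives $\mathcal{R}(T)$ closed, and $(0,k)\in\mathcal{R}(T)$ forces $k=0$, so $\beta(T)=\infty$ and $T$ is upper semi-Weyl. If $\mathcal{R}(D)$ is closed (so necessarily $\alpha(D)=\infty$), split $\mathcal{R}(A)^\perp=\mathcal{M}_1\oplus\mathcal{M}_2$ with both summands infinite-dimensional, fix an isometry $B_1:\mathcal{N}(D)\to\mathcal{M}_1$, and put $B=B_1P_{\mathcal{N}(D)}$, bounded on $\mathcal{K}$: then $\mathcal{R}(T)=(\mathcal{R}(A)\oplus\mathcal{M}_1)\oplus\mathcal{R}(D)$ is closed, $\mathcal{N}(T)=\mathcal{N}(A)\oplus\{0\}$, and $\beta(T)\ge\dim\mathcal{M}_2=\infty$, so $T$ is upper semi-Weyl.

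The step I expect to be the main obstacle is showing $\mathcal{R}(A)$ is closed in the necessity part: from $\mathcal{R}(T)$ closed one obtains only that $\mathcal{R}(A)+B\mathcal{N}(D)$ is closed, which is strictly weaker (a dense hyperplane together with one more line is all of $\mathcal{H}$), and it is precisely the combination of the reduced-minimum-modulus inequality with the finiteness of $\dim P_1\mathcal{N}(T)$ that upgrades this to closedness of $\mathcal{R}(A)$ itself. A secondary difficulty is, on the sufficiency side, choosing $B$ so that $\mathcal{R}(T)$ is closed while $\alpha(T)=\alpha(A)$ and $\beta(T)$ is as large as possible simultaneously — which is what forces the split according to whether $\mathcal{R}(D)$ is closed — and, in the last necessity case, the reduction to a finite-codimensional quotient that makes the relatively compact perturbation lemma applicable.
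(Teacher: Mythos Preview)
Your argument is correct and follows the same overall strategy as the paper: necessity via ``if $\beta(A)<\infty$ then a finite-rank perturbation of $T$ diagonalises, forcing $D$ upper semi-Fredholm and index additivity,'' and sufficiency via an explicit $B$ built from a unitary into $\mathcal{R}(A)^\perp$ when $\beta(A)=\infty$. The packaging differs in a few places. For necessity, the paper simply asserts that $A$ is upper semi-Fredholm, whereas you supply the reduced-minimum-modulus argument; and where you pass to a finite-dimensional quotient $\mathcal{H}/R$ to make the perturbation $(QB,0)$ finite-rank, the paper instead writes $T$ as a $4\times4$ block with respect to $\mathcal{N}(A)^\perp\oplus\mathcal{N}(A)\oplus\mathcal{N}(D)^\perp\oplus\mathcal{N}(D)$ and $\mathcal{R}(A)\oplus\mathcal{R}(A)^\perp\oplus\overline{\mathcal{R}(D)}\oplus\mathcal{R}(D)^\perp$, kills two entries by an invertible transformation, and observes that the remaining off-diagonal entries land in $\mathcal{R}(A)^\perp$ and are hence finite-rank. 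For sufficiency, the paper handles the entire case $\beta(A)=\infty$ with a single construction: split $\mathcal{R}(A)^\perp=M\oplus N$ with both summands infinite-dimensional and take $B$ to be a unitary $U:\mathcal{K}\to M$; this works uniformly regardless of whether $\mathcal{R}(D)$ is closed or $\alpha(D)=\infty$, so your split into two sub-cases is not needed (your first sub-case is in fact exactly this with $M=\mathcal{R}(A)^\perp$, $N=\{0\}$, which loses the automatic $\beta(T)=\infty$ from $N$). For the first sufficiency branch the paper cites an external lemma showing $T$ is upper semi-Weyl for \emph{every} $B$, while your choice $B=0$ is of course enough for the existence statement.
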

\begin{proof}
Suppose that $T$ is upper semi-Weyl operator for some $B\in \mathcal{C}^+_{D}(\mathcal{K},\mathcal{H})$. Then
$A$ is upper semi-Fredholm operator.

If $\beta(A)<\infty$, then $A$ is Fredholm operator. Also since $T$ is upper semi-Fredholm operator, we have $D$ is upper semi-Fredholm operator. In fact, $T$ can be written as follows:

\begin{center}
$T =\left[
 \begin{array}{cccc}
        A_1 & 0&B_1 & B_3 \\
      0  & 0&B_2 & B_4 \\
      0  & 0&D_1 & 0 \\
      0  & 0&0 & 0 \\
 \end{array}
\right]: \left(
 \begin{array}{c}
      \mathcal{D}(A)\cap\mathcal{N}(A)^\bot \\
       \mathcal{N}(A)\\
       \mathcal{D}(D)\cap\mathcal{N}(D)^\bot \\
       \mathcal{N}(D)\\
\end{array}
\right)
       \longrightarrow
\left(
\begin{array}{c}
      \mathcal{R}(A)\\
      \mathcal{R}(A)^\bot \\
       \overline{\mathcal{R}(D)}\\
      \mathcal{R}(D)^\bot \\
\end{array}
\right)$,
\end{center}
$A_1: \mathcal{D}(A)\cap\mathcal{N}(A)^\bot \longrightarrow\mathcal{R}(A)$ is a bijection, then there exists operator
\begin{center}
$Q =\left[
 \begin{array}{cccc}
        I & 0&-A^{-1}_1B_1 & -A^{-1}_1B_3 \\
      0  & I&0 & 0 \\
      0  & 0&I & 0 \\
      0  & 0&0 & I \\
 \end{array}
\right]: \left(
 \begin{array}{c}
      \mathcal{R}(A)\\
      \mathcal{R}(A)^\bot \\
       \overline{\mathcal{R}(D)}\\
      \mathcal{R}(D)^\bot \\
\end{array}
\right)
       \longrightarrow
\left(
\begin{array}{c}
      \mathcal{R}(A)\\
      \mathcal{R}(A)^\bot \\
       \overline{\mathcal{R}(D)}\\
      \mathcal{R}(D)^\bot \\
\end{array}
\right)$
\end{center}
such that
\begin{center}
$TQ =\left[
 \begin{array}{cccc}
        A_1 & 0&0 & 0 \\
      0  & 0&B_2 & B_4 \\
      0  & 0&D_1 & 0 \\
      0  & 0&0 & 0 \\
 \end{array}
\right]=\left[
 \begin{array}{cccc}
        A_1 & 0&0 & 0 \\
      0  & 0&0 & 0 \\
      0  & 0&D_1 & 0 \\
      0  & 0&0 & 0 \\
 \end{array}
\right]+\left[
 \begin{array}{cccc}
        0 & 0&0 & 0 \\
      0  & 0&B_2 & B_4 \\
      0  & 0&0 & 0 \\
      0  & 0&0 & 0 \\
 \end{array}
\right]$.
\end{center}

Since $\beta(A)<\infty$, then $B_2, B_4$ are compact operators, therefore $D$ is upper semi-Fredholm operator from Lemma 2.1. Hence $ind(T )=ind(A)+ind(D)\leq0$ i.e. $\alpha(A)+\alpha(D)\leq \beta(A)+\beta(D)$. So when $T$ is upper semi-Weyl operator for some $B\in \mathcal{C}^+_{D}(\mathcal{K},\mathcal{H})$, we have $A$ is upper semi-Fredholm operator and
\begin{center}
$\begin {cases}
\beta(A)=\infty \\
or~D~is~upper~semi-Fredholm~operator~and~\alpha(A)+\alpha(D)\leq \beta(A)+\beta(D).
\end{cases}$
\end{center}
Again
\begin{center}
$\begin {cases}
\beta(A)=\infty \\
or~D~is~upper~semi-Fredholm~operator~and~\alpha(A)+\alpha(D)\leq \beta(A)+\beta(D).
\end{cases}$
\end{center}
if and only if
\begin{center}
$\begin {cases}
\alpha(D)<\infty~$and$~\alpha(A)+\alpha(D)\leq \beta(A)+\beta(D) \\
or~\beta(A)=\alpha(D)=\infty &\mbox{if $\mathcal{R}(D)$ is closed}\\
   \beta(A)=\infty &\mbox{if $\mathcal{R}(D)$ is not closed}
\end{cases}$
\end{center}

Conversely, if $A, D$ are upper~semi-Fredholm operators and $\alpha(A)+\alpha(D)\leq \beta(A)+\beta(D)$, then $T$ is upper semi-Weyl operator for every $B\in \mathcal{C}^+_{D}(\mathcal{K},\mathcal{H})$, from [10, Lemma 2.2].

If $A$ is upper~semi-Fredholm operator and $\beta(A)=\infty$, then there exist two infinite dimensional subspaces  $M, N$ of $\mathcal{R}(A)^\bot$ such that $\mathcal{R}(A)^\bot=M \oplus N$. We define an operator
\begin{center}
$B =\left[
 \begin{array}{c}
        0  \\
      U   \\
      0 \\
 \end{array}
\right]:
\mathcal{K}
       \longrightarrow
\left(
\begin{array}{c}
      \mathcal{R}(A)\\
      M \\
       N\\
\end{array}
\right)$,
\end{center}
where $U:\mathcal{K}
       \longrightarrow M$ is an unitary operator. Therefore $T$ is upper semi-Weyl operator and $\alpha(A)=\alpha(T)$. In fact, if $\left(
\begin{array}{c}
      x\\
      y \\
\end{array}
\right)\in \mathcal{N}(T)$, then $Ax+By=0$ and $Dy=0$. Since $Ax=-By\in\mathcal{R}(A)\cap M=\{0\}$, then $Ax=0$ and $By=0$. And we obtain $y=0$ by the definition of $U$. Therefore $\mathcal{N}(T)\subset\mathcal{N}(A)\oplus\{0\}$. Moreover $\alpha(T)=\alpha(A)<\infty$. The following proof is that $\mathcal{R}(A)$ is a closed set. Since $\mathcal{R}(A)$ is a closed set, then we have to prove that $\mathcal{R}\left(
\begin{array}{c}
      U\\
      0 \\
      D \\
\end{array}
\right)$ is a closed set. Let $\left(
\begin{array}{c}
      U\\
      0 \\
      D \\
\end{array}
\right)x_n\longrightarrow \left(
\begin{array}{c}
      y_1\\
      y_2 \\
      y_3\\
\end{array}
\right)$, then $U x_n\longrightarrow y_1, D x_n\longrightarrow y_3$ and $y_2=0$. By the definition of $U$ and the closeness of $U$, we have $U^{-1}y_1\in\mathcal{D}(D)$ and $y_3=DU^{-1}y_1$, i.e. $\left(
\begin{array}{c}
      U\\
      0 \\
      D \\
\end{array}
\right) U^{-1}y_1=\left(
\begin{array}{c}
      y_1\\
      y_2 \\
      y_3\\
\end{array}
\right)\in\mathcal{R}\left(
\begin{array}{c}
      U\\
      0 \\
      D \\
\end{array}
\right)$, so $\mathcal{R}\left(
\begin{array}{c}
      U\\
      0 \\
      D \\
\end{array}
\right)$ is a closed set. Since $dim N=\infty$, then $\beta(T)=\infty$, therefore $T$ is upper semi-Weyl operator.
\end{proof}
\begin{lemma} Let $T =\left[
      \begin{array}{cc}
        A & B \\
      0 & D \\
      \end{array}
    \right]: \mathcal{D}(A)\oplus \mathcal{D}(D)\subset \mathcal{H} \oplus \mathcal{K} \longrightarrow \mathcal{H} \oplus \mathcal{K}$ be closed operator matrix such that $A\in \mathcal{C}(\mathcal{H}), D\in \mathcal{C}(\mathcal{K})$ with dense domains and let $B\in \mathcal{C}^+_{D}(\mathcal{K},\mathcal{H})$, then\\
$(1)$ if $A$ and $D$ are upper semi-Weyl, then $T$ is upper semi-Weyl.\\
$(2)$ if $A$ and $D$ are upper semi-Browder, then $T$ is upper semi-Browder.
\end{lemma}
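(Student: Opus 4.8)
\emph{Proof plan.}

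Part (1) should be immediate. If $A$ and $D$ are upper semi-Weyl, then both are upper semi-Fredholm and $\alpha(A)\le\beta(A)$, $\alpha(D)\le\beta(D)$ (these allow the value $+\infty$ on the right), so $\alpha(A)+\alpha(D)\le\beta(A)+\beta(D)$. Hence the hypotheses of [10, Lemma 2.2] — the result already invoked in the proof of Lemma 2.2 — are satisfied, and $T$ is upper semi-Weyl for every $B\in\mathcal{C}^+_{D}(\mathcal{K},\mathcal{H})$, in particular for the given one.

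For part (2) I would split the assertion into $(a)$ $T$ is upper semi-Fredholm and $(b)$ $asc(T)<\infty$. For $(a)$: an upper semi-Fredholm operator of finite ascent has index $\le 0$ (classical), so $A$ and $D$ are in particular upper semi-Weyl, and $(a)$ follows from part (1). The substance of the argument is therefore $(b)$.

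To prove $(b)$, set $p=asc(A)$ and $q=asc(D)$; I claim $asc(T)\le p+q$. Given $(x,y)\in\mathcal{D}(T^{n})$ write $T^{k}(x,y)=(x_{k},y_{k})$, so that the upper-triangular form gives the recursion $x_{0}=x$, $y_{0}=y$, $x_{k+1}=Ax_{k}+By_{k}$, $y_{k+1}=Dy_{k}$; unwinding it yields $y_{k}=D^{k}y$ and $x_{k}=A^{k}x+\sum_{i=0}^{k-1}A^{k-1-i}BD^{i}y$. In particular $y_{n}=D^{n}y$, so for $n\ge q$ the condition $y_{n}=0$ is equivalent to $y\in\mathcal{N}(D^{q})$, since $asc(D)=q$ forces $\mathcal{N}(D^{n})=\mathcal{N}(D^{q})$. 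When $y\in\mathcal{N}(D^{q})$ all terms $A^{n-1-i}BD^{i}y$ with $i\ge q$ vanish, the recursion becomes $x_{k+1}=Ax_{k}$ for $k\ge q$, and one gets $x_{n}=A^{n-q}x_{q}$ for $n\ge q$ with $x_{q}$ depending only on $(x,y)$. Consequently, for $n\ge p+q$ the membership $(x,y)\in\mathcal{N}(T^{n})$ is equivalent to ``$y\in\mathcal{N}(D^{q})$ and $x_{q}\in\mathcal{N}(A^{n-q})=\mathcal{N}(A^{p})$'', a condition independent of $n$; moreover $\mathcal{N}(T^{p+q})\subset\mathcal{D}(T^{n})$ for every $n$ because $T^{p+q}(x,y)=0$ lies in every domain. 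Hence $\mathcal{N}(T^{n})=\mathcal{N}(T^{p+q})$ for all $n\ge p+q$, so $asc(T)\le p+q<\infty$, and together with $(a)$ this gives that $T$ is upper semi-Browder.

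The only real obstacle is domain bookkeeping: for $(x,y)\in\mathcal{D}(T^{n})$ one must confirm that each $y_{k}=D^{k}y$ lies in $\mathcal{D}(D)\subset\mathcal{D}(B)$ (so that $x_{k+1}=Ax_{k}+By_{k}$ makes sense) and that $x_{q}\in\mathcal{D}(A^{n-q})$, both of which are forced by $(x,y)\in\mathcal{D}(T^{n})$ itself; and that $\mathcal{N}(T^{p+q})\subset\mathcal{D}(T^{n})$, which follows since $0$ belongs to every domain. The other ingredient to record is the classical fact used in $(a)$, namely that an upper semi-Fredholm operator of finite ascent has nonpositive index, so that $A,D$ upper semi-Browder do satisfy $\alpha(A)+\alpha(D)\le\beta(A)+\beta(D)$ and [10, Lemma 2.2] may be applied.
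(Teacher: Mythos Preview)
Your argument is correct. For part~(1) you invoke [10, Lemma~2.2] exactly as the paper does; the paper's own proof here is nothing more than the citation ``It is easily obtained by Lemma 2.2 of [10] and [13].'' For part~(2) the paper again simply defers to [13] (Taylor--Lay), whereas you supply an explicit, self-contained proof that $asc(T)\le asc(A)+asc(D)$ via the recursion $x_{k+1}=Ax_k+By_k$, $y_{k+1}=Dy_k$. Your route is therefore more detailed than the paper's bare citation, and the intermediate step you isolate---that upper semi-Browder implies upper semi-Weyl, so that part~(1) already furnishes the semi-Fredholm half of~(2)---makes the logical structure clearer. The domain bookkeeping you flag (that $(x,y)\in\mathcal{D}(T^{n})$ forces $y\in\mathcal{D}(D^{n})$ and $x_q\in\mathcal{D}(A^{n-q})$, and that $\mathcal{N}(T^{p+q})\subset\mathcal{D}(T^{n})$ because $0$ lies in every domain) is exactly the unbounded-operator wrinkle that the citation to [13] is presumably meant to cover, and your handling of it is sound.
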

\begin{proof}
It is easily obtained by Lemma 2.2 of [10] and [13].
\end{proof}

\section{Main results}

\begin{theorem} Let $T =\left[
      \begin{array}{cc}
        A & B \\
      0 & D \\
      \end{array}
    \right]: \mathcal{D}(A)\oplus \mathcal{D}(D)\subset \mathcal{H} \oplus \mathcal{K} \longrightarrow \mathcal{H} \oplus \mathcal{K}$ be closed operator matrix such that $A\in \mathcal{C}(\mathcal{H}), D\in \mathcal{C}(\mathcal{K})$ with dense domains and let $B\in \mathcal{C}^+_{D}(\mathcal{K},\mathcal{H})$, then \begin{center}
    $\sigma_{SF_{+}^{-}}(A)\cup\sigma_{SF_{+}^{-}}(D)=\sigma_{SF_{+}^{-}}(T)\cup(\sigma_{p_{+}}(D)\cap\sigma_{p_{+}}(A^*)^-)\cup(\sigma_{p_{+}}(A)\cap\sigma_{p_{+}}(D^*)^-)
    \cup(\sigma_{p_{\infty}}(A^*)^-\cap\sigma_{p_{\infty}}(D)),$
    \end{center}
where $\sigma_{p_{+}}(\cdot)=\{\lambda\in\sigma_p(\cdot):\alpha(\cdot-\lambda I)>\beta(\cdot-\lambda I)\}, \sigma_{p_{\infty}}(\cdot)=\{\lambda\in\sigma_p(\cdot):\alpha(\cdot-\lambda I)=\infty\}$ and $\sigma_{p_{+}}(\cdot)^-=\{\overline{\lambda}\in \mathbb{C}:\lambda\in\sigma_{p_{+}}(\cdot)\}$.
\end{theorem}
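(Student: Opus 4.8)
The plan is to prove the asserted identity by two inclusions. Since $B$ is fixed, translation is harmless: for each $\lambda$ the membership of $\lambda$ in every set occurring is governed by the operators $A-\lambda$, $D-\lambda$, $T-\lambda=\bigl[\begin{smallmatrix}A-\lambda&B\\0&D-\lambda\end{smallmatrix}\bigr]$ and their adjoints $(A-\lambda)^*=A^*-\bar\lambda$, $(D-\lambda)^*=D^*-\bar\lambda$, with $B$ still lying in $\mathcal{C}^{+}_{D-\lambda}(\mathcal{K},\mathcal{H})$; so it suffices to treat the point $0$, i.e. to compare the two sides ``at $\lambda=0$''. Write $\Delta:=(\sigma_{p_{+}}(D)\cap\sigma_{p_{+}}(A^*)^-)\cup(\sigma_{p_{+}}(A)\cap\sigma_{p_{+}}(D^*)^-)\cup(\sigma_{p_{\infty}}(A^*)^-\cap\sigma_{p_{\infty}}(D))$ for the defect set appearing on the right.

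For the inclusion $\sigma_{SF_{+}^{-}}(T)\cup\Delta\subseteq\sigma_{SF_{+}^{-}}(A)\cup\sigma_{SF_{+}^{-}}(D)$ I would first record that $\sigma_{SF_{+}^{-}}(T)\subseteq\sigma_{SF_{+}^{-}}(A)\cup\sigma_{SF_{+}^{-}}(D)$, which is the contrapositive of Lemma 2.3(1) applied to $T-\lambda$. Next, each of the three products defining $\Delta$ embeds in $\sigma_{SF_{+}^{-}}(A)\cup\sigma_{SF_{+}^{-}}(D)$ through its first or second factor, because for any closed densely defined $X$ one has $\sigma_{p_{+}}(X)\subseteq\sigma_{SF_{+}^{-}}(X)$ and $\sigma_{p_{\infty}}(X)\subseteq\sigma_{SF_{+}^{-}}(X)$: the conditions $\alpha(X-\mu)>\beta(X-\mu)$ and $\alpha(X-\mu)=\infty$ each forbid $X-\mu$ from being upper semi-Weyl. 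Thus $\Delta\subseteq\sigma_{p_{+}}(D)\cup\sigma_{p_{+}}(A)\cup\sigma_{p_{\infty}}(D)\subseteq\sigma_{SF_{+}^{-}}(A)\cup\sigma_{SF_{+}^{-}}(D)$, finishing this direction.

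The substantive inclusion is $\sigma_{SF_{+}^{-}}(A)\cup\sigma_{SF_{+}^{-}}(D)\subseteq\sigma_{SF_{+}^{-}}(T)\cup\Delta$, i.e.: if $T$ is upper semi-Weyl and $0\notin\Delta$, then $A$ and $D$ are both upper semi-Weyl. Since $T$ is upper semi-Weyl, the opening step of the proof of Lemma 2.2 gives that $A$ is upper semi-Fredholm, so $\alpha(A)<\infty$ and $\mathcal{R}(A)$ is closed; I would then split on $\beta(A)$. If $\beta(A)<\infty$ then $A$ is Fredholm, and the factorization $TQ$ from the proof of Lemma 2.2 together with Lemma 2.1 shows $D$ is upper semi-Fredholm with $ind(T)=ind(A)+ind(D)\le0$. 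Were $\alpha(A)>\beta(A)$, then $ind(A)\ge1$, forcing $ind(D)\le-1$, hence $\alpha(D)<\beta(D)$; using the adjoint identities for a closed densely defined operator of closed range (with the case $\beta(D)=\infty$ handled separately) one gets $\alpha(D^*)>\beta(D^*)$, so $0\in\sigma_{p_{+}}(D^*)$ while $0\in\sigma_{p_{+}}(A)$, giving $0\in\sigma_{p_{+}}(A)\cap\sigma_{p_{+}}(D^*)^-\subseteq\Delta$ — a contradiction; hence $A$ is upper semi-Weyl, and the symmetric computation (now using the second product in $\Delta$ and $0\in\sigma_{p_{+}}(A^*)$) rules out $\alpha(D)>\beta(D)$, so $D$ is upper semi-Weyl. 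If instead $\beta(A)=\infty$ then $A$ is automatically upper semi-Weyl ($ind(A)=-\infty$), and $\mathcal{R}(A)$ closed with $\beta(A)=\infty$ gives $\alpha(A^*)=\infty>\alpha(A)=\beta(A^*)$, so $0\in\sigma_{p_{+}}(A^*)\cap\sigma_{p_{\infty}}(A^*)$; since $0\notin\Delta$ this forces $0\notin\sigma_{p_{+}}(D)$ and $0\notin\sigma_{p_{\infty}}(D)$, i.e. $\alpha(D)<\infty$ and $\alpha(D)\le\beta(D)$.

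The hard part will be this last case: one still has to show that, when $\beta(A)=\infty$, upper semi-Weylness of $T$ forces $\mathcal{R}(D)$ to be closed — which, with $\alpha(D)<\infty$ and $\alpha(D)\le\beta(D)$ already in hand, would conclude that $D$ is upper semi-Weyl. The route I would take is again the block decomposition of $T$ from the proof of Lemma 2.2: $T$ upper semi-Weyl yields $\mathcal{R}(T)$ closed, and after reducing by the bounded invertible $Q$ and splitting off the bijective corner $A_1$, closedness of $\mathcal{R}(T)$ must be transported to closedness of the range of the ``$D$-block'', and thence — exploiting $\dim\mathcal{N}(D)<\infty$, so that the couplings $B_2,B_4$ only perturb by a finite-rank term — to closedness of $\mathcal{R}(D)$ itself. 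This is the delicate point: the construction in the proof of Lemma 2.2 shows that for a cleverly chosen $B$ the matrix $T$ can be upper semi-Weyl even with $\mathcal{R}(D)$ non-closed, so the argument cannot rest on any soft perturbation principle and must instead extract closed-range information directly from the fine structure of the reduced matrix; I expect the bulk of the work, and the only real risk of a gap, to lie here.
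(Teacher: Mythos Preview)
Your overall strategy coincides with the paper's: the easy inclusion is handled via Lemma~2.3 together with the trivial containments $\sigma_{p_+}(X),\sigma_{p_\infty}(X)\subseteq\sigma_{SF_+^-}(X)$, and the substantive inclusion via the necessary direction of Lemma~2.2 applied to $T-\lambda$. The paper organises the case split slightly differently---it first separates $ind(T-\lambda)=0$ (deferring that case to Theorem~3.2 of~[10]) from $ind(T-\lambda)<0$, and within the latter splits on whether $\lambda\in\sigma_{SF_+^-}(A)$ or $\lambda\in\sigma_{SF_+^-}(D)$---but the ingredients are the same as yours.

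Where you hesitate is exactly where the paper's argument is thinnest. In the sub-case $\lambda\in\sigma_{SF_+^-}(D)$ with $D-\lambda$ not upper semi-Fredholm, the paper records (correctly, from Lemma~2.2) that $\beta(A-\lambda)=\infty$, hence $\lambda\in\sigma_{p_\infty}(A^*)^-$, and then simply writes ``Thus $\lambda\in\sigma_{p_{\infty}}(A^*)^-\cap\sigma_{p_{\infty}}(D)$.'' No separate justification is offered for $\lambda\in\sigma_{p_\infty}(D)$, and none is available when the failure of upper semi-Fredholmness of $D-\lambda$ comes from $\mathcal{R}(D-\lambda)$ being non-closed while $\alpha(D-\lambda)<\infty$. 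Your instinct that the sufficiency construction in Lemma~2.2 obstructs any such argument is on target: with $A$ an isometry having infinite-dimensional cokernel and $D$ injective with dense non-closed range, the bounded $B$ built there makes $T$ upper semi-Weyl and $0\in\sigma_{SF_+^-}(D)$, yet $0$ lies in none of the three pieces of $\Delta$ (since $\alpha(D)=0$ rules out both $\sigma_{p_+}(D)$ and $\sigma_{p_\infty}(D)$, and $\alpha(A)=0$ rules out $\sigma_{p_+}(A)$). So your proposed rescue---forcing $\mathcal{R}(D)$ closed from upper semi-Weylness of $T$---cannot succeed for every admissible $B$, and the paper's one-line disposal of this case does not fill the gap you correctly flagged.
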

\begin{proof}
Let $\lambda\in(\sigma_{SF_{+}^{-}}(A)\cup\sigma_{SF_{+}^{-}}(D))\backslash\sigma_{SF_{+}^{-}}(T)$, then $T-\lambda I$ is upper semi-Fredholm and $ind(T-\lambda I)\leq 0$.

(1) If $ind(T-\lambda I)=0$, then $\lambda\in(\sigma_{p_{+}}(D)\cap\sigma_{p_{+}}(A^*)^-)\cup(\sigma_{p_{+}}(A)\cap\sigma_{p_{+}}(D^*)^-)
    $ from Theorem 3.2 of [10].

(2) If $ind(T-\lambda I)<0$, then $A-\lambda I$ is upper semi-Fredholm operator and
\begin{center}
$\begin {cases}
\alpha(A-\lambda I)+\alpha(D-\lambda I)\leq \beta(A-\lambda I)+\beta(D-\lambda I) ~$and$~\\
\alpha(D-\lambda I)<\infty ~or~\beta(A-\lambda I)=\alpha(D-\lambda I)=\infty &\mbox{if $\mathcal{R}(D-\lambda I)$ is closed}\\
   \beta(A-\lambda I)=\infty &\mbox{if $\mathcal{R}(D-\lambda I)$ is not closed}
\end{cases}$
\end{center}
by Lemma 2.2.

If $\lambda\in\sigma_{SF_{+}^{-}}(A)$, then $ind(A-\lambda I)>0$, so $\lambda\in\sigma_{p_{+}}(A)$. From Lemma 2.2, if $\mathcal{R}(D-\lambda I)$ is closed, we have $D-\lambda I$ is upper semi-Fredholm and $ind(D-\lambda I)<0$ or $\beta(A-\lambda I)=\alpha(D-\lambda I)=\infty$ (but it is impossible), so $\lambda\in\sigma_{p_{+}}(D^*)^-$. If $\mathcal{R}(D-\lambda I)$ is not closed, then $\beta(A-\lambda I)=\infty$, it is impossible. Therefore $\lambda\in\sigma_{p_{+}}(A)\cap\sigma_{p_{+}}(D^*)^-$.

If $\lambda\in\sigma_{SF_{+}^{-}}(D)$, then $D-\lambda I$ is not upper semi-Fredholm or $ind(D-\lambda I)>0$.

Let $D-\lambda I$ is not upper semi-Fredholm, then $\mathcal{R}(D-\lambda I)$ is not closed or $\alpha(D-\lambda I)=\infty$, moreover, $\beta(A-\lambda I)=\infty$ by Lemma 2.2. Thus $\lambda\in\sigma_{p_{\infty}}(A^*)^-\cap\sigma_{p_{\infty}}(D)$.

Let $D-\lambda I$ is upper semi-Fredholm and $ind(D-\lambda I)>0$, then $ind(A-\lambda I)<0$, so $\lambda\in\sigma_{p_{+}}(D)\cap\sigma_{p_{+}}(A^*)^-$.

Conversely, from Lemma 2.3, we have $\sigma_{SF_{+}^{-}}(A)\cup\sigma_{SF_{+}^{-}}(D)\supseteq\sigma_{SF_{+}^{-}}(T)$ and $(\sigma_{p_{+}}(D)\cap\sigma_{p_{+}}(A^*)^-)\cup(\sigma_{p_{+}}(A)\cap\sigma_{p_{+}}(D^*)^-)
    \cup(\sigma_{p_{\infty}}(A^*)^-\cap\sigma_{p_{\infty}}(D))\subseteq\sigma_{SF_{+}^{-}}(A)\cup\sigma_{SF_{+}^{-}}(D)$.

The proof is complete.
\end{proof}
\begin{corollary} Let $T =\left[
      \begin{array}{cc}
        A & B \\
      0 & D \\
      \end{array}
    \right]: \mathcal{D}(A)\oplus \mathcal{D}(D)\subset \mathcal{H} \oplus \mathcal{K} \longrightarrow \mathcal{H} \oplus \mathcal{K}$ be closed operator matrix such that $A\in \mathcal{C}(\mathcal{H}), D\in \mathcal{C}(\mathcal{K})$ with dense domains and let $B\in \mathcal{C}^+_{D}(\mathcal{K},\mathcal{H})$, then \begin{center}
    $\sigma_{SF_{+}^{-}}(A)\cup\sigma_{SF_{+}^{-}}(D)=\sigma_{SF_{+}^{-}}(T)$
    \end{center}
    if and only if
    \begin{center}
    $\sigma_{p_{+}}(D)\cap\sigma_{p_{+}}(A^*)^-  \subseteq\sigma_{SF_{+}^{-}}(T)$ and $\sigma_{p_{+}}(A)\cap\sigma_{p_{+}}(D^*)^-\subseteq\sigma_{SF_{+}^{-}}(T)$,
 $\sigma_{p_{\infty}}(A^*)^-\cap\sigma_{p_{\infty}}(D)\subseteq\sigma_{SF_{+}^{-}}(T)$.
    \end{center}
 In particular, if $\sigma_{p_{+}}(D)\cap\sigma_{p_{+}}(A^*)^- =\emptyset$ and $\sigma_{p_{+}}(A)\cap\sigma_{p_{+}}(D^*)^-=\emptyset$,
 $\sigma_{p_{\infty}}(A^*)^-\cap\sigma_{p_{\infty}}(D)=\emptyset$, then $\sigma_{SF_{+}^{-}}(A)\cup\sigma_{SF_{+}^{-}}(D)=\sigma_{SF_{+}^{-}}(T)$.
\end{corollary}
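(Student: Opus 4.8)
\medskip
\noindent\textbf{Proof proposal.} The plan is to deduce this corollary directly from the identity in Theorem~3.1, with no further spectral analysis required. Put $S:=\sigma_{SF_{+}^{-}}(T)$ and
\begin{center}
$E:=(\sigma_{p_{+}}(D)\cap\sigma_{p_{+}}(A^*)^-)\cup(\sigma_{p_{+}}(A)\cap\sigma_{p_{+}}(D^*)^-)\cup(\sigma_{p_{\infty}}(A^*)^-\cap\sigma_{p_{\infty}}(D)),$
\end{center}
so that Theorem~3.1 reads $\sigma_{SF_{+}^{-}}(A)\cup\sigma_{SF_{+}^{-}}(D)=S\cup E$. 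In particular the inclusion $S\subseteq\sigma_{SF_{+}^{-}}(A)\cup\sigma_{SF_{+}^{-}}(D)$, which ultimately comes from Lemmas~2.2 and 2.3, is already built into this identity, so nothing has to be re-proved on that side.

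From here the argument is pure set algebra. For arbitrary sets one has $S\cup E=S$ if and only if $E\subseteq S$; hence $\sigma_{SF_{+}^{-}}(A)\cup\sigma_{SF_{+}^{-}}(D)=\sigma_{SF_{+}^{-}}(T)$ is equivalent to $E\subseteq\sigma_{SF_{+}^{-}}(T)$. Since $E$ is the union of the three sets $\sigma_{p_{+}}(D)\cap\sigma_{p_{+}}(A^*)^-$, $\sigma_{p_{+}}(A)\cap\sigma_{p_{+}}(D^*)^-$ and $\sigma_{p_{\infty}}(A^*)^-\cap\sigma_{p_{\infty}}(D)$, the inclusion $E\subseteq\sigma_{SF_{+}^{-}}(T)$ holds precisely when each of these three sets is contained in $\sigma_{SF_{+}^{-}}(T)$. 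This is exactly the claimed equivalence. For the final ``in particular'' assertion, if all three intersections are empty then $E=\emptyset\subseteq\sigma_{SF_{+}^{-}}(T)$ trivially, and the desired equality follows at once from the first part.

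I do not expect a genuine obstacle here; the only point requiring care is to resist re-deriving the inclusion $\sigma_{SF_{+}^{-}}(T)\subseteq\sigma_{SF_{+}^{-}}(A)\cup\sigma_{SF_{+}^{-}}(D)$ by hand, since it is already encoded in Theorem~3.1. Everything then reduces to the elementary observation that a union $X\cup Y$ equals $X$ exactly when $Y\subseteq X$, together with distributing an inclusion over a finite union.
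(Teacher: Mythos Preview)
Your proposal is correct and matches the paper's approach: the paper states this corollary immediately after Theorem~3.1 with no separate proof, treating it as a direct set-theoretic consequence of the identity there. Your reduction to the observation that $S\cup E=S$ iff $E\subseteq S$ is exactly what is intended.
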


\begin{theorem} Let $T =\left[
      \begin{array}{cc}
        A & B \\
      0 & D \\
      \end{array}
    \right]: \mathcal{D}(A)\oplus \mathcal{D}(D)\subset \mathcal{H} \oplus \mathcal{K} \longrightarrow \mathcal{H} \oplus \mathcal{K}$ be closed operator matrix such that $A\in \mathcal{C}(\mathcal{H}), D\in \mathcal{C}(\mathcal{K})$ with dense domains and let $B\in \mathcal{C}^+_{D}(\mathcal{K},\mathcal{H})$, then \begin{center}
    $\sigma_{lb}(A)\cup\sigma_{lb}(D)=\sigma_{lb}(T)\cup (\sigma_{p_{\infty}}(A^*)^-\cap\sigma_{p_{\infty}}(D))\cup\sigma_{asc}(D)$,
    \end{center}
where $\sigma_{asc}(\cdot)=\{\lambda\in\mathbb{C}:asc(\cdot-\lambda I)=\infty\}$.
\end{theorem}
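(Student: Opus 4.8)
The plan is to mimic the structure of the proof of Theorem 3.1, handling one containment at a time. First I would take $\lambda\in(\sigma_{lb}(A)\cup\sigma_{lb}(D))\setminus\sigma_{lb}(T)$, so that $T-\lambda I$ is upper semi-Fredholm with finite ascent. From $\mathrm{asc}(T-\lambda I)<\infty$ one immediately gets $\alpha(T-\lambda I)\le\beta(T-\lambda I)$ (finite ascent for an upper semi-Fredholm operator forces $\mathrm{ind}\le 0$; this is the standard fact used implicitly, available via [13]), hence $T-\lambda I$ is upper semi-Weyl, so $A-\lambda I$ is upper semi-Fredholm and the case distinction of Lemma 2.2 applies to $\lambda$. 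In particular $\lambda\notin\sigma_{SF_+^-}(T)$, so Theorem 3.1 already tells us that if $\lambda\in\sigma_{SF_+^-}(A)\cup\sigma_{SF_+^-}(D)$ then $\lambda$ lies in one of the three point-spectrum sets on the right. The new content is therefore concentrated on the case where $A-\lambda I$ and $D-\lambda I$ are both upper semi-Weyl (so $\lambda\notin\sigma_{SF_+^-}(A)\cup\sigma_{SF_+^-}(D)$) but $\lambda\in\sigma_{lb}(A)\cup\sigma_{lb}(D)$: this means $\mathrm{asc}(A-\lambda I)=\infty$ or $\mathrm{asc}(D-\lambda I)=\infty$.

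Next I would dispose of the subcase $\mathrm{asc}(A-\lambda I)=\infty$. Here I want to argue that $\mathrm{asc}(T-\lambda I)=\infty$ as well, contradicting $\lambda\notin\sigma_{lb}(T)$; the point is that a vector $x$ with $(A-\lambda I)^k x=0$ but $(A-\lambda I)^{k-1}x\ne 0$ lifts to $(x,0)\in\mathcal D(T^\infty)$ killed by $(T-\lambda I)^k$ but not by $(T-\lambda I)^{k-1}$, because the upper-triangular structure gives $(T-\lambda I)^k(x,0)=((A-\lambda I)^k x,0)$. Wait — $(x,0)$ must lie in the iterated domains, which it does since $x\in\mathcal D(A^\infty)$ and $0\in\mathcal D(D^\infty)$, and $B0=0$. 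So indeed $\mathrm{asc}(A-\lambda I)\le\mathrm{asc}(T-\lambda I)$, forcing this subcase to be vacuous. Hence the only surviving possibility is $\mathrm{asc}(A-\lambda I)<\infty$ but $\mathrm{asc}(D-\lambda I)=\infty$, i.e. $\lambda\in\sigma_{asc}(D)$, which is on the right-hand side. This establishes $\subseteq$.

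For the reverse containment I would argue three inclusions. The inclusion $\sigma_{lb}(T)\subseteq\sigma_{lb}(A)\cup\sigma_{lb}(D)$ is the contrapositive of Lemma 2.3(2). For $\sigma_{asc}(D)\subseteq\sigma_{lb}(A)\cup\sigma_{lb}(D)$: if $\mathrm{asc}(D-\lambda I)=\infty$ then $D-\lambda I$ is not upper semi-Browder, so $\lambda\in\sigma_{lb}(D)$. For $(\sigma_{p_\infty}(A^*)^-\cap\sigma_{p_\infty}(D))\subseteq\sigma_{lb}(A)\cup\sigma_{lb}(D)$: $\lambda\in\sigma_{p_\infty}(D)$ means $\alpha(D-\lambda I)=\infty$, so $D-\lambda I$ is not upper semi-Fredholm, hence not upper semi-Browder, giving $\lambda\in\sigma_{lb}(D)$ (here the $A^*$ factor is not even needed, exactly as with the analogous term in Theorem 3.1). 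Combining, the right-hand side is contained in $\sigma_{lb}(A)\cup\sigma_{lb}(D)$, completing the proof.

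The main obstacle I anticipate is the ascent-lifting step in the second paragraph: one must be careful that the iterated domains $\mathcal D((T-\lambda I)^k)$ genuinely contain the lifted vectors, which relies on the diagonal-domain structure $\mathcal D(T)=\mathcal D(A)\oplus\mathcal D(D)$ and on $B$ being defined on $\mathcal D(D)$, so that applying powers of $T-\lambda I$ to a vector of the form $(x,0)$ with $x\in\bigcap_n\mathcal D(A^n)$ never leaves the domain and never activates the $B$-entry. A secondary point is to make sure the chain of equivalences from Lemma 2.2, reused verbatim from the proof of Theorem 3.1, is legitimately available because $\lambda\notin\sigma_{SF_+^-}(T)$ — which, as noted, follows for free from finite ascent plus upper semi-Fredholmness. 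Everything else reduces to the bookkeeping already carried out in Theorem 3.1.
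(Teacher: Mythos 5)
There is a genuine gap in your forward inclusion, located exactly where you declare the case $\lambda\in\sigma_{SF_{+}^{-}}(A)\cup\sigma_{SF_{+}^{-}}(D)$ to be already disposed of by Theorem 3.1. Given $\lambda\notin\sigma_{SF_{+}^{-}}(T)$ (which you correctly derive from finite ascent plus upper semi-Fredholmness), Theorem 3.1 places such a $\lambda$ in one of \emph{its} three sets $\sigma_{p_{+}}(D)\cap\sigma_{p_{+}}(A^*)^-$, $\sigma_{p_{+}}(A)\cap\sigma_{p_{+}}(D^*)^-$, $\sigma_{p_{\infty}}(A^*)^-\cap\sigma_{p_{\infty}}(D)$; but only the last of these appears on the right-hand side of Theorem 3.3, whose extra terms are $\sigma_{p_{\infty}}(A^*)^-\cap\sigma_{p_{\infty}}(D)$ and $\sigma_{asc}(D)$. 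So landing in one of the first two sets does not finish the argument, and the ``new content'' is not confined to the case where both diagonal entries are upper semi-Weyl. The gap is closable, but only by adding the index--ascent facts you never invoke: (i) your own lifting of $(x,0)$ gives $asc(A-\lambda I)\le asc(T-\lambda I)<\infty$, and an upper semi-Fredholm operator with finite ascent has $ind\le 0$, so $\lambda\notin\sigma_{p_{+}}(A)$ and the set $\sigma_{p_{+}}(A)\cap\sigma_{p_{+}}(D^*)^-$ is unreachable here; (ii) if $\lambda\in\sigma_{p_{+}}(D)$ then $D-\lambda I$ is upper semi-Fredholm with $ind(D-\lambda I)>0$, hence $asc(D-\lambda I)=\infty$, i.e. $\lambda\in\sigma_{asc}(D)$ after all. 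Without (i) and (ii) the forward inclusion is not proved.

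For comparison, the paper never routes through Theorem 3.1: from $T-\lambda I$ upper semi-Fredholm with finite ascent it concludes directly (citing [13], in substance your lifting argument) that $A-\lambda I$ is upper semi-Fredholm with finite ascent, hence $\lambda\notin\sigma_{lb}(A)$ and therefore $\lambda\in\sigma_{lb}(D)$; it then splits on whether $D-\lambda I$ is upper semi-Fredholm, using Lemma 2.2 to obtain $\beta(A-\lambda I)=\infty$ (the $\sigma_{p_{\infty}}$ term) when it is not, and $\sigma_{asc}(D)$ when it is. Your reverse inclusion and your domain-careful lifting of $(x,0)$ through powers of $T-\lambda I$ are correct and coincide with what the paper does.
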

\begin{proof}
Let $\lambda\in(\sigma_{lb}(A)\cup\sigma_{lb}(D))\backslash\sigma_{lb}(T)$, then $T-\lambda I$ is upper semi-Fredholm and $asc(T-\lambda I)<\infty$, therefore $A-\lambda I$ is upper semi-Fredholm and $asc(A-\lambda I)<\infty$, from [13]. i.e. $\lambda$ is not belong to $\sigma_{lb}(A)$, so $\lambda\in\sigma_{lb}(D)$,i.e. $D-\lambda I$ is not upper semi-Fredholm or $asc(D-\lambda I)=\infty$. If $\mathcal{R}(D-\lambda I)$ is not closed or $\alpha(D-\lambda I)=\infty$, then $\beta(A-\lambda I)=\infty$ by Lemma 2.2. Therefore $\lambda\in\sigma_{p_{\infty}}(A^*)^-\cap\sigma_{p_{\infty}}(D)$. If $D-\lambda I$ is upper semi-Fredholm but $asc(D-\lambda I)=\infty$, then $\lambda\in\sigma_{asc}(D)$. So $(\sigma_{lb}(A)\cup\sigma_{lb}(D))\backslash\sigma_{lb}(T)\subset \sigma_{p_{\infty}}(A^*)^-\cup\sigma_{asc}(D)$.

Conversely, we have $\sigma_{lb}(A)\cup\sigma_{lb}(D)\supseteq\sigma_{lb}(T)$ by Lemma 2.3,  and $(\sigma_{p_{\infty}}(A^*)^-\cap\sigma_{p_{\infty}}(D))\cup\sigma_{asc}(D)\subseteq\sigma_{lb}(A)\cup\sigma_{lb}(D)$ is easily obtained.
\end{proof}
\begin{corollary} Let $T =\left[
      \begin{array}{cc}
        A & B \\
      0 & D \\
      \end{array}
    \right]: \mathcal{D}(A)\oplus \mathcal{D}(D)\subset \mathcal{H} \oplus \mathcal{K} \longrightarrow \mathcal{H} \oplus \mathcal{K}$ be closed operator matrix such that $A\in \mathcal{C}(\mathcal{H}), D\in \mathcal{C}(\mathcal{K})$ with dense domains and let $B\in \mathcal{C}^+_{D}(\mathcal{K},\mathcal{H})$, then \begin{center}
    $\sigma_{lb}(A)\cup\sigma_{lb}(D)=\sigma_{lb}(T)$
    \end{center}
    if and only if
    \begin{center}
    $\sigma_{asc}(D)\subseteq\sigma_{lb}(T)$ and
 $\sigma_{p_{\infty}}(A^*)^-\cap\sigma_{p_{\infty}}(D)\subseteq\sigma_{lb}(T)$.
    \end{center}
 In particular, if  $\sigma_{asc}(D)=\emptyset$ and
 $\sigma_{p_{\infty}}(A^*)^-\cap\sigma_{p_{\infty}}(D) =\emptyset$, then $\sigma_{lb}(A)\cup\sigma_{lb}(D)=\sigma_{lb}(T)$.
\end{corollary}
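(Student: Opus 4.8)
The plan is to deduce the corollary directly from the set identity in Theorem 3.3, with no new operator-theoretic input required. First I would recall that Theorem 3.3 gives
\[
\sigma_{lb}(A)\cup\sigma_{lb}(D)=\sigma_{lb}(T)\cup(\sigma_{p_{\infty}}(A^*)^-\cap\sigma_{p_{\infty}}(D))\cup\sigma_{asc}(D).
\]
In particular this already forces the inclusion $\sigma_{lb}(T)\subseteq\sigma_{lb}(A)\cup\sigma_{lb}(D)$ unconditionally (equivalently, this is Lemma 2.3(2) read contrapositively). Hence the equality $\sigma_{lb}(A)\cup\sigma_{lb}(D)=\sigma_{lb}(T)$ is equivalent to the reverse inclusion $\sigma_{lb}(A)\cup\sigma_{lb}(D)\subseteq\sigma_{lb}(T)$.

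Next I would substitute the Theorem 3.3 identity into this reverse inclusion: $\sigma_{lb}(A)\cup\sigma_{lb}(D)\subseteq\sigma_{lb}(T)$ holds if and only if $\sigma_{lb}(T)\cup(\sigma_{p_{\infty}}(A^*)^-\cap\sigma_{p_{\infty}}(D))\cup\sigma_{asc}(D)\subseteq\sigma_{lb}(T)$, i.e.\ if and only if $(\sigma_{p_{\infty}}(A^*)^-\cap\sigma_{p_{\infty}}(D))\cup\sigma_{asc}(D)\subseteq\sigma_{lb}(T)$. Since a union of two sets is contained in a third set exactly when each of the two is, this last condition is equivalent to the conjunction $\sigma_{asc}(D)\subseteq\sigma_{lb}(T)$ and $\sigma_{p_{\infty}}(A^*)^-\cap\sigma_{p_{\infty}}(D)\subseteq\sigma_{lb}(T)$, which is precisely the stated criterion.

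Finally, the ``in particular'' clause is immediate from the equivalence just established: if both $\sigma_{asc}(D)=\emptyset$ and $\sigma_{p_{\infty}}(A^*)^-\cap\sigma_{p_{\infty}}(D)=\emptyset$, then both sets are (vacuously) contained in $\sigma_{lb}(T)$, so $\sigma_{lb}(A)\cup\sigma_{lb}(D)=\sigma_{lb}(T)$.

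I do not expect a genuine obstacle here: the argument is a purely set-theoretic unpacking of Theorem 3.3. The only point that deserves a moment's care is the very first step, namely that $\sigma_{lb}(T)\subseteq\sigma_{lb}(A)\cup\sigma_{lb}(D)$ holds with no extra hypotheses; but this is exactly what Theorem 3.3 (or, independently, Lemma 2.3(2)) delivers, so even that is already in hand.
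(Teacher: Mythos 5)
Your proposal is correct and is exactly the intended argument: the paper states this corollary without proof precisely because it is the purely set-theoretic consequence of Theorem 3.3 (together with the unconditional inclusion $\sigma_{lb}(T)\subseteq\sigma_{lb}(A)\cup\sigma_{lb}(D)$ from Lemma 2.3) that you spell out. No further comment is needed.
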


\section{applications}
In this section, we obtained some properties of Hamiltonian operator matrix.
\begin{proposition} Let
\begin{center}
$H=\left[
      \begin{array}{cc}
        A & B \\
      0 & -A^* \\
      \end{array}
    \right]: \mathcal{D}(A)\oplus \mathcal{D}(A^*)\subset  \mathcal{H} \oplus \mathcal{H} \longrightarrow \mathcal{H}\oplus \mathcal{H}$
\end{center}
 be a Hamiltonian operator matrix. Then
 \begin{center}
    $\sigma_{SF_{+}^{-}}(A)\cup\sigma_{SF_{+}^{-}}(-A^*)=\sigma_{SF_{+}^{-}}(H)$
    \end{center}
    if and only if
    \begin{center}
    $\sigma_{p_{+}}(-A^*)\cap\sigma_{p_{+}}(A^*)^-  \subseteq\sigma_{SF_{+}^{-}}(H)$ and $\sigma_{p_{+}}(A)\cap\sigma_{p_{+}}((-A^*)^*)^-\subseteq\sigma_{SF_{+}^{-}}(H)$,
 $\sigma_{p_{\infty}}(A^*)^-\cap\sigma_{p_{\infty}}(D)\subseteq\sigma_{SF_{+}^{-}}(H)$.
    \end{center}
 In particular, if $\sigma_{p_{+}}(-A^*)\cap\sigma_{p_{+}}(A^*)^- =\emptyset$ and $\sigma_{p_{+}}(A)\cap\sigma_{p_{+}}((-A^*)^*)^-=\emptyset$,
 $\sigma_{p_{\infty}}(A^*)^-\cap\sigma_{p_{\infty}}(D)=\emptyset$, then $\sigma_{SF_{+}^{-}}(A)\cup\sigma_{SF_{+}^{-}}(-A^*)=\sigma_{SF_{+}^{-}}(H)$.
\end{proposition}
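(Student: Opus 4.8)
The plan is to recognize $H$ as a particular instance of the upper triangular operator matrix $T$ studied in Corollary 3.2, namely the one obtained by taking $\mathcal{K}=\mathcal{H}$ and $D=-A^*$, and then to transcribe the statement of that corollary. In this way the proof reduces to two routine verifications: that a Hamiltonian operator matrix of the stated form satisfies the hypotheses of Corollary 3.2, and that substituting $D=-A^*$ turns the conditions of Corollary 3.2 into the ones displayed in the proposition.

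For the hypotheses, I would argue as follows. Since $A$ is closed with dense domain, its adjoint $A^*$ is closed with dense domain and $A^{**}=A$; hence $-A^*\in\mathcal{C}(\mathcal{H})$ is densely defined, so $D=-A^*$ is an admissible diagonal entry. Because $B$ is self-adjoint it is in particular closed, hence closable, so $B\in\mathcal{C}^+(\mathcal{H},\mathcal{H})$. The remaining requirement for $B\in\mathcal{C}^+_{-A^*}(\mathcal{H},\mathcal{H})$ is the domain inclusion $\mathcal{D}(-A^*)=\mathcal{D}(A^*)\subseteq\mathcal{D}(B)$; this is built into the hypothesis that $H$ has the diagonal domain $\mathcal{D}(A)\oplus\mathcal{D}(A^*)$ (recall from Definition 1.1 that with lower-left entry $0$ one has $\mathcal{D}(H)=\mathcal{D}(A)\oplus(\mathcal{D}(B)\cap\mathcal{D}(A^*))$, so writing it as $\mathcal{D}(A)\oplus\mathcal{D}(A^*)$ already asserts $\mathcal{D}(A^*)\subseteq\mathcal{D}(B)$). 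Finally, $H$ is closed by the very definition of a Hamiltonian operator matrix. Hence $H$ is exactly an operator matrix $T$ of the kind to which Corollary 3.2 applies, with $D=-A^*$.

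Substituting $D=-A^*$ into Corollary 3.2 and using the adjoint identity $(-A^*)^*=-A^{**}=-A$, the three containment conditions of the corollary become precisely $\sigma_{p_{+}}(-A^*)\cap\sigma_{p_{+}}(A^*)^-\subseteq\sigma_{SF_{+}^{-}}(H)$, $\sigma_{p_{+}}(A)\cap\sigma_{p_{+}}((-A^*)^*)^-\subseteq\sigma_{SF_{+}^{-}}(H)$, and $\sigma_{p_{\infty}}(A^*)^-\cap\sigma_{p_{\infty}}(-A^*)\subseteq\sigma_{SF_{+}^{-}}(H)$, while $\sigma_{SF_{+}^{-}}(D)=\sigma_{SF_{+}^{-}}(-A^*)$. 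Corollary 3.2 then gives the asserted equivalence with $\sigma_{SF_{+}^{-}}(A)\cup\sigma_{SF_{+}^{-}}(-A^*)=\sigma_{SF_{+}^{-}}(H)$. The ``in particular'' clause is immediate: if the three intersection sets are empty, they are trivially contained in $\sigma_{SF_{+}^{-}}(H)$, so the equivalence forces the equality.

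There is no substantial obstacle here; the content of the proposition is entirely carried by Corollary 3.2. The only points that require a little care are the domain bookkeeping — deducing $B\in\mathcal{C}^+_{-A^*}(\mathcal{H},\mathcal{H})$ from self-adjointness of $B$ together with the diagonal-domain convention for $H$ — and the routine identity $(-A^*)^*=-A$ needed to recognize the proposition's conditions as the verbatim specialization of those in Corollary 3.2.
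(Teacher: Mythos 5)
Your proposal is correct and follows exactly the route the paper intends: Propositions 4.1 and 4.2 are stated as immediate specializations of Corollaries 3.2 and 3.4 with $D=-A^*$, and your verification of the hypotheses (closedness of $H$, dense domain of $-A^*$, and the inclusion $\mathcal{D}(A^*)\subseteq\mathcal{D}(B)$ forced by the diagonal-domain convention) together with the identity $(-A^*)^*=-A$ is precisely the bookkeeping required. You even correctly replace the paper's residual $\sigma_{p_{\infty}}(D)$ in the third condition by $\sigma_{p_{\infty}}(-A^*)$, which is evidently a typographical slip in the stated proposition.
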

\begin{proposition} Let
\begin{center}
$H=\left[
      \begin{array}{cc}
        A & B \\
      0 & -A^* \\
      \end{array}
    \right]: \mathcal{D}(A)\oplus \mathcal{D}(A^*)\subset  \mathcal{H} \oplus \mathcal{H} \longrightarrow \mathcal{H}\oplus \mathcal{H}$
\end{center}
 be a Hamiltonian operator matrix. Then
 \begin{center}
    $\sigma_{lb}(A)\cup\sigma_{lb}(-A^*)=\sigma_{lb}(H)$
    \end{center}
    if and only if
    \begin{center}
    $\sigma_{asc}(-A^*)\subseteq\sigma_{lb}(H)$ and
 $\sigma_{p_{\infty}}(A^*)^-\cap\sigma_{p_{\infty}}(D)\subseteq\sigma_{lb}(H)$.
    \end{center}
 In particular, if  $\sigma_{asc}(-A^*)=\emptyset$ and
 $\sigma_{p_{\infty}}(A^*)^-\cap\sigma_{p_{\infty}}(D)=\emptyset$, then $\sigma_{lb}(A)\cup\sigma_{lb}(-A^*)=\sigma_{lb}(H)$.
\end{proposition}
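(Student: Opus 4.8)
The plan is to deduce Proposition~4.2 directly from Corollary~3.4, observing that a Hamiltonian operator matrix of the displayed form is precisely the closed upper triangular operator matrix $T=\left[\begin{array}{cc}A&B\\0&D\end{array}\right]$ studied in Section~3, specialized to $\mathcal{K}=\mathcal{H}$ and $D=-A^*$. Thus the first step is to make this identification explicit: by Definition~1.1 the operator $H$ is closed and has the diagonal domain $\mathcal{D}(A)\oplus\mathcal{D}(A^*)$, which is exactly $\mathcal{D}(A)\oplus\mathcal{D}(D)$ once we put $D:=-A^*$. (Here the symbol $D$ occurring in the conditions of the proposition is to be read as $-A^*$.)

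Next I would verify the three standing hypotheses of Corollary~3.4. Definition~1.1 provides that $A$ is closed with dense domain, so $A\in\mathcal{C}(\mathcal{H})$ is densely defined; since $A$ is closed and densely defined, $A^*$ is closed and densely defined, hence so is $D=-A^*$, giving $D\in\mathcal{C}(\mathcal{H})$ with dense domain. For the coupling term, $B$ is self-adjoint, hence closed and in particular closable, so $B\in\mathcal{C}^+(\mathcal{H})$; and the requirement that $H$ have the diagonal domain $\mathcal{D}(A)\oplus\mathcal{D}(A^*)$ forces $\mathcal{D}(A^*)\cap\mathcal{D}(B)=\mathcal{D}(A^*)$, i.e. $\mathcal{D}(D)=\mathcal{D}(-A^*)\subset\mathcal{D}(B)$. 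Therefore $B\in\mathcal{C}^+_{D}(\mathcal{H},\mathcal{H})$, exactly the class appearing in Corollary~3.4.

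With the hypotheses in place, the remaining step is a direct substitution: Corollary~3.4 applied to $T=H$ with $D=-A^*$ immediately yields that $\sigma_{lb}(A)\cup\sigma_{lb}(-A^*)=\sigma_{lb}(H)$ if and only if $\sigma_{asc}(-A^*)\subseteq\sigma_{lb}(H)$ and $\sigma_{p_{\infty}}(A^*)^-\cap\sigma_{p_{\infty}}(-A^*)\subseteq\sigma_{lb}(H)$, and the ``in particular'' clause is the special case where both of these defect sets are empty. I expect the only point requiring any care to be the bookkeeping in the second step -- confirming that $-A^*$ is again a closed, densely defined operator (so that $\sigma_{lb}(-A^*)$, $\sigma_{asc}(-A^*)$ and $\sigma_{p_{\infty}}(-A^*)$ are well defined in the sense of Section~1) and that the self-adjoint entry $B$ lies in $\mathcal{C}^+_{-A^*}(\mathcal{H},\mathcal{H})$; beyond this there is no genuine analytic obstacle, everything else being immediate from Corollary~3.4.
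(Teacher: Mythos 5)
Your proposal is correct and is exactly the derivation the paper intends: Proposition~4.2 is stated as an immediate application of Corollary~3.4 with $\mathcal{K}=\mathcal{H}$ and $D=-A^*$, and your verification that $-A^*$ is closed and densely defined and that the self-adjoint $B$ lies in $\mathcal{C}^+_{-A^*}(\mathcal{H},\mathcal{H})$ (forced by the diagonal domain) supplies the only details the paper leaves implicit. You also correctly read the symbol $D$ in the stated conditions as $-A^*$, which is a typographical slip in the paper's statement.
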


\begin{example} Consider the plate bending equation in domain $\{(x, y) : 0 < x < 1, 0 <
y < 1\}$
\begin{center}
$D(\frac{\partial^2}{\partial x^2}+\frac{\partial^2}{\partial y^2})^2\omega=0,$
\end{center}
with boundary conditions
\begin{center}
$\omega(x, 0) = \omega(x, 1) = 0,$ \\
$\frac{\partial^2\omega}{\partial x^2}+\frac{\partial^2\omega}{\partial y^2}= 0, y = 0, 1.$
\end{center}
Set
\begin{center}
$\theta =\frac{\partial\omega}{\partial x }, q = D(\frac{\partial^3\omega}{\partial x^3}+\frac{\partial^3\omega}{\partial y^3}), m =-D(\frac{\partial^2\omega}{\partial x^2}+\frac{\partial^2\omega}{\partial y^2}),$
\end{center}
then the equation can be written as the following Hamiltonian system [14]
\begin{center}
$\frac{\partial}{\partial x }\left[
 \begin{array}{c}
        \omega \\
      \theta \\
      q\\
      m \\
 \end{array}
\right]=\left[
 \begin{array}{cccc}
        0 & 1&0 & 0 \\
      - \frac{\partial^2}{\partial y^2} & 0&0 & -\frac{1}{D} \\
      0  & 0&0 & \frac{\partial^2}{\partial y^2} \\
      0  & 0&-1 & 0 \\
 \end{array}
\right]\left[
 \begin{array}{c}
        \omega \\
      \theta \\
      q\\
      m \\
 \end{array}
\right]$
\end{center}
and the corresponding Hamiltonian operator matrix is given by
\begin{center}
$H=\left[
 \begin{array}{cccc}
        0 & 1&0 & 0 \\
      - \frac{d^2}{d y^2} & 0&0 & -\frac{1}{D} \\
      0  & 0&0 & \frac{d^2}{d y^2} \\
      0  & 0&-1 & 0 \\
 \end{array}
\right]:\left[
      \begin{array}{cc}
        A & B \\
      0 & -A^* \\
      \end{array}
    \right]$
\end{center}
with domain is $\mathcal{D}(A)\oplus \mathcal{D}(A^*)\subset  \mathcal{H} \oplus \mathcal{H}$, where $\mathcal{H}= L^2(0, 1) \oplus L^2(0, 1),\mathcal{A} = AC[0, 1],$
and
\begin{center}
$A =\left[
      \begin{array}{cc}
       0 & 1 \\
      - \frac{d^2}{d y^2} & 0\\
      \end{array}
    \right], B =\left[
      \begin{array}{cc}
       0 & 0 \\
  0 & -\frac{1}{D}\\
      \end{array}
    \right],$\\
$\mathcal{D}(A)=\{\left[
      \begin{array}{c}
       \omega \\
 \theta \\
      \end{array}
    \right]\in\mathcal{H}: \omega(0) = \omega(1) = 0, \omega^{'}\in\mathcal{A}, \omega^{''}\in\mathcal{H}\}.$
\end{center}
Through a simple calculation, we know $\sigma_{p_{\infty}}(A^*)=\emptyset, \sigma_{p_{+}}(-A^*)\cap\sigma_{p_{+}}(A^*)^-=\emptyset,
-\sigma_{p_{+}}(A)^-\cap\sigma_{p_{+}}(A)=\emptyset$ and $\sigma_{asc}(A^*)=\emptyset$. Then from Propositions 4.1 and 4.2, we have
\begin{center}
$-\sigma_{\ast}(A^*)\cup\sigma_{\ast}(A)=\sigma_{\ast}(H)$,
\end{center}
where $\sigma_{\ast}\in\{\sigma_{SF_{+}^{-}}, \sigma_{lb}\}$.
\end{example}
\section*{Disclosure statement}

No potential conflict of interest was reported by the authors.

\section*{Funding}

This work is supported by National Natural Science Foundation of China
[grant number 11561053],[grant number 11761029]; Natural Science Foundation of Inner Mongolia[grant number 2018BS01001]; Research Program of Sciences at Universities of Inner Mongolia Autonomous Region[grant number NJZZ18018],[grant number NJZY18021]; Subject of Research Foundation of Inner Mongolia Normal University of China [grant number 2016ZRYB001].

\end{document}